\newtheorem{thm}{Theorem}[section]
\newtheorem{lem}{Lemma}[section]
\theoremstyle{definition}
\theoremstyle{remark}
\newtheorem{rem}{Remark}[section]
\numberwithin{equation}{section}
\title[Uniqueness in Thermo and Photoacoustic Tomography]{Determining both sound speed and internal source in thermo- and photo-acoustic tomography}
\author{Hongyu Liu}
\address{Department of Mathematics, Hong Kong Baptist University,Kowloon Tong, Hong Kong SAR.\vspace*{-3mm}}
\address{\vspace*{-2mm}and}
\address{HKBU Institute of Research and Continuing Education, Virtual University Park, Shenzhen, P. R. China.}
\email{hongyu.liuip@gmail.com}
\author{Gunther Uhlmann}
\address{Department of Mathematics, University of Washington, Seattle, WA 98195, USA. \vspace*{-3mm}}
\address{\vspace*{-2mm} and}
\address{Institute for Advanced Study, Hong Kong University of Science and Technology, Hong Kong SAR.}
\email{gunther@math.washington.edu}
\begin{document}

\begin{abstract}
This paper concerns thermoacoustic tomography and photoacoustic tomography, two couple-physics imaging modalities that attempt to combine the high resolution of ultrasound and the high contrast capabilities of electromagnetic waves. We give sufficient conditions to recover both the sound speed of the medium being probed and the source.
\end{abstract}

\keywords{Thermoacoustic and photoacoustic tomography, sound speed, internal source, uniqueness}

\subjclass[2010]{Primary 35R30; Secondary 35L05 }


\maketitle


\section{Introduction}

In this paper we consider thermoacoustic tomography (TAT) and photoacoustic tomography (PAT), two coupled-physics modalities that attempt to combine the high resolution of ultrasound and the high contrast capabilities of electromagnetic waves. In the latter a laser pulse is used to probe a medium and in the former a lower frequency wave is used. As a result the medium is heated and produces an elastic expansion that in turns produces a sound wave that is measured outside the medium. This is used to find optical properties of the medium in the case of PAT or electromagnetic properties in the case of TAT. These two imaging modalities and other coupled-physics imaging techniques have the potential of significant clinical and biological applications \cite{KRK,SU4,W,XW}.


We next present the mathematical formulation of the PAT and TAT problem described above. Let $\Omega\subset\mathbb{R}^3$ be a compact set such that $\mathbb{R}^3\backslash \Omega$ is connected. Let $c(x)\in L^\infty(\mathbb{R}^3)$ be such that $supp(1-c)\subset \Omega$, and $c(x)\geq c_0$ for $x\in \Omega$, where $c_0$ is a positive constant. Consider the following Cauchy problem of the wave equation for $u(x,t)$, $(x, t)\in \mathbb{R}^3\times\mathbb{R}_+$,
\begin{equation}\label{eq:1}
\begin{cases}
& \displaystyle{\frac{1}{c^2(x)}\partial_t^2 u(x, t)-\Delta u(x, t)=0 \quad\ \mbox{in\ \ $\mathbb{R}^3\times \mathbb{R}_+,$}}\smallskip \\
& u(x,0)=f(x),\ \ \ \partial_t u(x,0)=0\quad \mbox{for \ $x\in\mathbb{R}^3$},
\end{cases}
\end{equation}
where $f(x)\in L^\infty(\mathbb{R}^3)$ with $supp(f)\subset \Omega$. $c(x)$ represents the wave speed in the body $\Omega$ and $f(x)$ represents an internal source produced by the tissue volume expansion. The wave measurements are given by
\begin{equation}\label{eq:2}
\Lambda_{f,c}(x,t)=u(x, t),\quad (x, t)\in {\partial \Omega\times\mathbb{R}_+}. 
\end{equation}
The inverse problem is to recover $(f, c)$ by knowledge of $\Lambda_{f,c}(x,t)$. In what follows, for simplicity, $(f, c)$ shall be referred to as a TAT/PAT configuration. 

The uniqueness issue for the TAT/PAT problem has gained extensive attention in the literature. However, most of the available results intend to recover either $f$ or $c$, by assuming that the other one is known. For the recovery of $f$ by assuming that $c$ is known, we refer to \cite{KK} and the references therein for the constant sound speed case; and \cite{SU2} for the variable sound speed case. In \cite{SU}, the recovery of a smooth sound speed $c$ by assuming that the source $f$ is known was considered. Uniqueness was established in \cite{SU} with one measurement under the geometric assumption that the domain is foliated by strictly convex hypersurfaces with respect to the Riemannian metric $g=\frac{1}{c^2} dx^2$, where $dx^2$ denotes the Euclidean metric. In \cite{SU3}, the instability of the linearized problem in recovering both the internal source and the sound speed was established. In \cite{OU}, stability estimate was derived in recovering $f$ if there is a modeling error of the variable sound speed. In \cite{FH}, an interesting connection was observed between the TAT/PAT problem and the interior transmission eigenvalue problem. 

In this work, we show that in certain practically interesting scenarios, one can recover both the sound speed and the internal source by a single measurement. Our argument relies on the temporal Fourier transform, converting the TAT/PAT problem into the frequency domain. Then by using the low frequency asymptotics, we derive two internal identities involving the unknowns. The main uniqueness that we could derive from the two integral identities is that if the sound speed is constant and the internal source function is independent of one variable, then both of them can be recovered. We also point out some other uniqueness results in our study. Moreover, our argument requires the least regularity assumptions on the parameters as well as the domain. The rest of the paper is organized as follows. In Section 2, we present the main results of the current study. Sections 3 is devoted to the proofs.

\section{Statement of the main results}

We first introduce the admissible class of TAT/PAT configurations $(f, c)$ for our study. Let $(f, c)$, $\Omega$ and $u(x, t)$ be given in \eqref{eq:1}. Our argument relies essentially on the {\it temporal Fourier transform} of the function $u(x,t)$ defined by
\begin{equation}\label{eq:t1}
\hat{u}(x, k)=\frac{1}{2\pi}\int_0^\infty u(x, t) e^{ik t}\ dt,\quad (x, k)\in\mathbb{R}^3\times \mathbb{R}_+. 
\end{equation}
A TAT/PAT configuration $(f, c)$ is said to be admissible if \eqref{eq:t1} defines an $H_{loc}^1(\mathbb{R}^3)$ distribution for $k\in (0,\epsilon_0)$ with a certain $\epsilon_0\in\mathbb{R}_+$. It is remarked that a sufficient condition to guarantee the admissibility of a TAT/PAT configuration $(f, c)$ is that $c$ is a non-trapping sound speed; see \cite{FH,SU} and the references therein for more relevant discussion. 

In what follows, we let
\[
\Phi(x)=\frac{e^{ik|x|}}{4\pi |x|}\quad\mbox{for\ \ $|x|\neq 0$}.
\]
$\Phi$ is the fundamental solution to $-\Delta-k^2$. For the subsequent use, we also set
\begin{equation}\label{eq:tt4}
\Phi_0(x)=\frac{1}{4\pi |x|}\quad\mbox{for\ \ $|x|\neq 0$}.
\end{equation}
Let $B_R:=B_R(0)$ denote a central ball of radius $R\in\mathbb{R}_+$ containing $\Omega$. Moreover, for a function $\psi\in L^\infty(\mathbb{R}^3)$ supported in $B_R$, we denote
\begin{equation}\label{eq:linverse}
\Delta^{-1}(\psi)(x):=\int_{\mathbb{R}^3}\Phi_0(x-y)\cdot \psi(y)\ dy,\quad x\in\mathbb{R}^3. 
\end{equation}

We shall prove that

\begin{thm}\label{thm:1}
Let $(f_j, c_j)$, $j=1,2$, be two admissibleTAT/PAT configurations such that 
\begin{equation}\label{eq:equal} 
\Lambda_{f_1,c_1}(x,t)=\Lambda_{f_2,c_2}(x,t),\quad (x,t)\in\partial \Omega\times\mathbb{R}_+.
\end{equation}
Then one has
\begin{equation}\label{eq:orth1}
\int_{\mathbb{R}^3}(c_1^{-2}f_1-c_2^{-2}f_2)(x)\cdot v(x)\ dx=0,
\end{equation}
where $v(x)$ is an arbitrary harmonic function, and
\begin{equation}\label{eq:orth2}
\begin{split}
& \int_{\mathbb{R}^3} (1-c_1^{-2})(y)\cdot \Delta^{-1}(c_1^{-2}f_1)(y)\cdot \Phi_0(x-y)\ dy\\
&\qquad +\frac{1}{8\pi}\int_{\mathbb{R}^3} (c_1^{-2}f_1)(y)\cdot |x-y|\ dy\\
=& \int_{\mathbb{R}^3} (1-c_2^{-2})(y)\cdot \Delta^{-1}(c_2^{-2}f_2)(y)\cdot \Phi_0(x-y)\ dy\\
&\qquad +\frac{1}{8\pi}\int_{\mathbb{R}^3} (c_2^{-2}f_2)(y)\cdot |x-y|\ dy,
\end{split}
\end{equation}
which holds for any $x\in\partial B_R$.
\end{thm}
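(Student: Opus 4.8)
The plan is to pass to the frequency domain via the temporal Fourier transform \eqref{eq:t1} and to read off the two identities from the low-frequency behaviour of the resulting Helmholtz problem. First I would transform the wave equation in \eqref{eq:1}: integrating by parts twice in $t$ and using the Cauchy data $u(x,0)=f(x)$, $\partial_t u(x,0)=0$ (the decay in $t$ supplied by admissibility annihilating the boundary terms at $t=\infty$), one gets $\widehat{\partial_t^2 u}=\tfrac{ik}{2\pi}f-k^2\hat u$. Hence, writing $c^{-2}=1-q$ with $q=1-c^{-2}$ supported in $\Omega$, the transformed field solves
\[
(\Delta+k^2)\hat u = k^2 q\,\hat u + \frac{ik}{2\pi}\,c^{-2}f \qquad\text{in }\mathbb{R}^3,
\]
together with the Sommerfeld radiation condition (again guaranteed by admissibility). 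Since $\Phi$ is the outgoing fundamental solution of $-\Delta-k^2$, this is equivalent to the Lippmann--Schwinger representation
\[
\hat u(x,k)=-\int_{\mathbb{R}^3}\Phi(x-y)\Big[k^2 q(y)\hat u(y,k)+\tfrac{ik}{2\pi}(c^{-2}f)(y)\Big]\,dy .
\]

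Next I would use the hypothesis \eqref{eq:equal}. Transforming it gives $\hat u_1=\hat u_2$ on $\partial\Omega$ for $k\in(0,\epsilon_0)$; as $q_j$ and $f_j$ are supported in $\Omega$, each $\hat u_j$ solves the homogeneous Helmholtz equation in $\mathbb{R}^3\setminus\Omega$ with the radiation condition, so exterior Dirichlet uniqueness yields $\hat u_1=\hat u_2$ throughout $\mathbb{R}^3\setminus\Omega$, in particular on $\partial B_R$ and a neighbourhood of it. I would then substitute the small-$k$ expansion $\Phi(x-y)=\Phi_0(x-y)+\tfrac{ik}{4\pi}-\tfrac{k^2}{8\pi}|x-y|+O(k^3)$ (recall \eqref{eq:tt4}) into the representation and solve the integral equation perturbatively. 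Since the source is $O(k)$, the field expands as $\hat u_j=k\,\hat u_j^{(1)}+k^2\hat u_j^{(2)}+k^3\hat u_j^{(3)}+\cdots$; matching powers gives $\hat u_j^{(1)}=-\tfrac{i}{2\pi}\Delta^{-1}(c_j^{-2}f_j)$ with $\Delta^{-1}$ as in \eqref{eq:linverse}, a constant proportional to $\int c_j^{-2}f_j$ at order $k^2$, and
\[
\hat u_j^{(3)}(x)=\frac{i}{2\pi}\int_{\mathbb{R}^3}\Phi_0(x-y)(1-c_j^{-2})(y)\,\Delta^{-1}(c_j^{-2}f_j)(y)\,dy+\frac{i}{16\pi^2}\int_{\mathbb{R}^3}|x-y|\,(c_j^{-2}f_j)(y)\,dy .
\]

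Finally I would match the two expansions on $\partial B_R$ order by order. The order-$k$ identity reads $\Delta^{-1}(c_1^{-2}f_1)=\Delta^{-1}(c_2^{-2}f_2)$ in the exterior; setting $w=c_1^{-2}f_1-c_2^{-2}f_2$, its Newtonian potential $N=\Delta^{-1}(w)$ vanishes together with $\partial_\nu N$ on $\partial B_R$. Applying Green's identity on $B_R$ to $N$ and an arbitrary harmonic $v$, using $\Delta N=-w$, all boundary terms drop and one obtains $\int_{\mathbb{R}^3} w\,v\,dx=0$, which is exactly \eqref{eq:orth1}; the order-$k^2$ matching is the special case $v\equiv 1$ and gives nothing new. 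The order-$k^3$ matching, after cancelling the common factor $\tfrac{i}{2\pi}$, is precisely \eqref{eq:orth2}.

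The main obstacle is the rigorous justification of the low-frequency expansion under the stated minimal regularity: one must verify that the volume-potential operator is bounded on the relevant spaces, that the Neumann series for the Lippmann--Schwinger equation converges uniformly for small $k$, and that the remainder is genuinely $O(k^4)$, so that coefficient matching at orders $k$, $k^2$ and $k^3$ is legitimate. Controlling the expansion with only $L^\infty$ coefficients and no smoothness of $\partial\Omega$, and confirming that the radiation condition and the $H^1_{loc}$ regularity of $\hat u$ persist after the transform, is where the delicate work lies; the algebraic extraction of \eqref{eq:orth1}--\eqref{eq:orth2} is then routine.
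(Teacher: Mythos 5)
Your proposal is correct and follows the same skeleton as the paper: temporal Fourier transform, exterior uniqueness for the Helmholtz equation to propagate the data equality from $\partial\Omega$ to $\partial B_R$, the Lippmann--Schwinger representation, and matching the low-frequency expansion of $\Phi$ order by order; your coefficients $\hat u_j^{(1)}$, $\hat u_j^{(2)}$, $\hat u_j^{(3)}$ agree exactly with the paper's \eqref{eq:ted1}, and the order-$k^3$ matching is \eqref{eq:orth2} just as you say. The one genuine divergence is how you pass from the leading-order identity to \eqref{eq:orth1}. The paper takes the resulting equality of Newtonian potentials on $\partial B_R$ (its \eqref{eq:pp7}), inserts the spherical-harmonics addition formula for $\Phi_0(x-y)$, and uses completeness of $\{Y_\alpha^\beta\}$ on $\mathbb{S}^2$ to conclude that all moments of $w=c_1^{-2}f_1-c_2^{-2}f_2$ against homogeneous harmonic polynomials vanish. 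You instead observe that $N=\Delta^{-1}(w)$ vanishes together with $\partial_\nu N$ on $\partial B_R$ (since $\hat u_1=\hat u_2$ in a full exterior neighbourhood) and apply Green's identity on $B_R$ with an arbitrary harmonic $v$, using $\Delta N=-w$. Your route is more elementary, avoids the series manipulation, and delivers \eqref{eq:orth1} directly for any $v$ harmonic on a neighbourhood of $\overline{B_R}$ (for $v$ harmonic only on $B_R$ one shrinks the ball or passes to the limit, the same caveat that is implicit in the paper's expansion of $v$ into harmonic polynomials). Your closing remark correctly identifies the only technical point both arguments lean on, namely the uniform-in-$x$ control of the $O(k^4)$ remainder in the Neumann series, which the paper dispatches as ``straightforward though tedious.''
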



%
%

It is noted that in \eqref{eq:orth1} and \eqref{eq:orth2} the integrations can be taken over $\Omega$ or $B_R$ instead of $\mathbb{R}^3$ since both $(f_1, c_1)$ and $(f_2, c_2)$ are supported in $\Omega$. In the sequel, we let $x=(x_j)_{j=1}^3\in\mathbb{R}^3$. Using Theorem~\ref{thm:1}, one can show that

\begin{thm}\label{thm:2}
Let $(f_j, c_j)$, $j=1,2$, be two admissibleTAT/PAT configurations. Then one has that
\begin{equation}\label{eq:2eq}
c_1^{-2}(x)f_1(x)=c_2^{-2}(x) f_2(x)\quad\mbox{for a.e.}\ x\in\Omega,
\end{equation}
if either of the following two conditions is satisfied\smallskip
\begin{enumerate}
\item[$i).$]~$\varphi(x):=c_1^{-2}(x)f_1(x)-c_2^{-2}(x) f_2(x)$ is a harmonic function in $\Omega$;\smallskip

\item[$ii).$]~$\varphi(x)$ from item i) is independent of one variable, say $x_j$, $1\leq j\leq 3$. 
\end{enumerate}

\end{thm}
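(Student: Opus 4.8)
The plan is to reduce both cases to the single statement that the function
$\varphi=c_1^{-2}f_1-c_2^{-2}f_2$ vanishes a.e.\ on $\Omega$, and to extract this from the first integral identity \eqref{eq:orth1} furnished by Theorem~\ref{thm:1} (the hypothesis of equal data \eqref{eq:equal} being what grants us that identity). I would begin by recording the support structure: since $supp(f_j)\subset\Omega$ and $c_j\equiv 1$ off $\Omega$, each product $c_j^{-2}f_j$ is supported in $\Omega$, so $\varphi\in L^\infty(\mathbb{R}^3)$ is compactly supported in the compact set $\Omega$ and in particular lies in $L^2(\Omega)$. The content of Theorem~\ref{thm:2} is then that each of the structural hypotheses i) and ii) is strong enough to upgrade the \emph{weak} orthogonality \eqref{eq:orth1} --- which only tests $\varphi$ against globally harmonic functions --- into the pointwise conclusion \eqref{eq:2eq}. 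Notably, neither case will require the second identity \eqref{eq:orth2}, which is reserved for the finer problem of separating $c$ from $f$.

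For case i) I would argue entirely in $L^2(\Omega)$. Identity \eqref{eq:orth1} says that $\varphi$ is orthogonal to every function harmonic on all of $\mathbb{R}^3$, in particular to every harmonic polynomial. The key point is that, because $\mathbb{R}^3\setminus\Omega$ is connected, a Runge-type (Walsh) approximation theorem for harmonic functions guarantees that harmonic polynomials are dense --- uniformly on $\Omega$, hence in $L^2(\Omega)$ --- in the space of functions harmonic on $\Omega$. Consequently orthogonality against harmonic polynomials propagates to orthogonality against every $L^2$-harmonic function on $\Omega$. Since by hypothesis $\varphi$ is itself harmonic in $\Omega$, it is an admissible test function against itself, and one concludes
\begin{equation*}
\int_\Omega |\varphi(x)|^2\,dx=0,
\end{equation*}
so that $\varphi=0$ a.e.\ in $\Omega$, which is exactly \eqref{eq:2eq}.

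For case ii) the argument is purely structural and does not even use the full strength of \eqref{eq:orth1}. Suppose $\varphi$ is independent of the variable $x_j$, say $x_3$ after relabelling, so that $\varphi(x)=\phi(x_1,x_2)$. For each fixed $(x_1,x_2)$ the vertical line $\{(x_1,x_2,t):t\in\mathbb{R}\}$ is unbounded while $\Omega$ is bounded, so the line meets $\mathbb{R}^3\setminus\Omega$; at such points $\varphi=0$, and by $x_3$-independence this forces $\phi(x_1,x_2)=0$. As $(x_1,x_2)$ is arbitrary, $\varphi\equiv 0$, i.e.\ \eqref{eq:2eq}. In the application emphasised in the introduction --- $c_1,c_2$ constant and $f_1,f_2$ each independent of one variable --- the difference $c_1^{-2}f_1-c_2^{-2}f_2$ inherits this independence, so this case applies.

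I expect the only genuinely delicate step to be the density statement invoked in case i): one must ensure harmonic polynomials approximate an arbitrary $L^2$-function harmonic in $\Omega$ well enough to pass to the limit inside the pairing $\int_\Omega \varphi\,v\,dx$. This is precisely where the hypothesis that $\mathbb{R}^3\setminus\Omega$ is connected enters --- it is the topological condition in Runge/Walsh-type theorems that rules out ``holes'' obstructing polynomial approximation --- and where some care with the boundary regularity of $\Omega$ and with $L^2$ versus uniform convergence will be needed. Case ii), by contrast, is elementary once the compact support of $\varphi$ has been recorded.
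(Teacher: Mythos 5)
Your case~i) ends the same way the paper's does: test $\varphi$ against itself in \eqref{eq:orth1} to conclude $\int_\Omega|\varphi|^2\,dx=0$. Where the paper simply ``takes $v$ harmonic with $v=\varphi$ on $\Omega$'', you interpose a Runge--Walsh approximation of $\varphi$ by harmonic polynomials using the connectedness of $\mathbb{R}^3\setminus\Omega$. That is a reasonable --- arguably more defensible --- route, since a function harmonic only in $\Omega$ need not extend to an entire harmonic function, while \eqref{eq:orth1} is established in the paper precisely for harmonic polynomials. The boundary-regularity caveat you flag (Walsh-type theorems approximate functions harmonic on a \emph{neighborhood} of the compact set, whereas $\varphi$ is harmonic only in the interior) is real, but your argument is no less rigorous than the paper's one-line version.

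Case~ii), however, contains a genuine gap. Your vertical-line argument only shows that a function which is \emph{globally} independent of $x_3$ on $\mathbb{R}^3$ and compactly supported must vanish identically --- a tautology that renders hypothesis~ii) empty and uses neither the measurement data nor \eqref{eq:orth1}. That cannot be the intended reading: in Theorem~\ref{thm:25} the source $f$ is simultaneously supported in $\Omega$, independent of $x_j$, and satisfies $\int_\Omega f\,dx>0$, which is impossible under your interpretation (by your own reasoning the difference $c_1^{-2}f_1-c_2^{-2}f_2$ --- and indeed each $f_j$ --- would be forced to vanish before any data are used). The intended hypothesis is that $\varphi$ \emph{restricted to} $\Omega$ depends only on the remaining two variables, i.e.\ $\varphi$ equals some $\phi(x_1,x_2)$ on $\Omega$ and $0$ outside, so it genuinely varies with $x_3$ across $\partial\Omega$. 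For such $\varphi$ your argument yields nothing: the vertical line does exit $\Omega$, where $\varphi=0$, but $\varphi$ is permitted to jump there, so no conclusion about $\phi$ follows. The paper's proof of case~ii) is the substantive content of the theorem: it tests \eqref{eq:orth1} against the harmonic complex exponentials $v(x)=e^{i\zeta\cdot x}$ with $\zeta=\xi+i\xi^{\perp}$, $\xi=(\xi_1,\xi_2,0)$, $\xi^{\perp}=(0,0,\xi_3^{\perp})$ and $\xi_1^2+\xi_2^2=(\xi_3^{\perp})^2$ (so that $\zeta\cdot\zeta=0$), and deduces that the two-dimensional Fourier transform of a positively weighted version of $\phi$ vanishes for all frequencies, whence $\varphi=0$. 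You would need to supply this step, or an equivalent completeness argument for that family of harmonic test functions, for your proof of case~ii) to stand.
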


\begin{rem}
By Theorem~\ref{thm:2}, one readily sees that if an admissible TAT/PAT configuration $(f, c)$ is such that either $c^{-2}f$ is a harmonic function in $\Omega$, or $c^{-2}f$ is a function independent of one variable $x_j$, $1\leq j\leq 3$, then $c^{-2} f$ is uniquely determined by $\Lambda_{f, c}(x, t)$ for $(x, t)\in \partial \Omega\times\mathbb{R}_+$. Using this result, one can further infer that if the sound speed $c$ is known, then the internal source $f$ is uniquely determined; whereas if $f$ is known in advance, then the sound speed $c|_{supp(f)}$ is uniquely determined. 
\end{rem}

Next, we present a practical scenario in uniquely determining both the sound speed and the internal source. 

\begin{thm}\label{thm:25}
Let $(f, c)$ be an admissible TAT/PAT configuration supported in $\Omega$ such that $c$ is constant, and $f$ is independent of one variable $x_j$, $1\leq j\leq 3$. Furthermore, we assume that 
\begin{equation}\label{eq:bp25}
f(x)\geq 0\quad\mbox{for a.e.}\ x\in\Omega\quad\mbox{and}\quad \int_{\Omega} f(x)\ dx>0.
\end{equation}
Then both $f$ and $c$ are uniquely determined by $\Lambda_{f, c}(x, t)$ for $(x, t)\in\partial \Omega\times\mathbb{R}_+$.
\end{thm}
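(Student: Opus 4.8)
The plan is to establish uniqueness by comparing two admissible configurations $(f_1,c_1)$ and $(f_2,c_2)$, each obeying the hypotheses of the theorem---constant sound speed, source independent of the same coordinate $x_j$, and the sign condition \eqref{eq:bp25}---and producing identical data $\Lambda_{f_1,c_1}=\Lambda_{f_2,c_2}$ on $\partial\Omega\times\mathbb{R}_+$; I would then show $c_1=c_2$ and $f_1=f_2$. The argument naturally splits into first pinning down the product $c^{-2}f$ and then decoupling $c$ from $f$, the latter being where the additional structural assumptions are decisive.

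For the first stage, since $c_1,c_2$ are constant on $\Omega$ and $f_1,f_2$ are each independent of $x_j$, the difference $\varphi=c_1^{-2}f_1-c_2^{-2}f_2$ is itself independent of $x_j$. Hence condition ii) of Theorem~\ref{thm:2} applies and yields
\[
g(x):=c_1^{-2}f_1(x)=c_2^{-2}f_2(x)\qquad\text{for a.e. } x\in\Omega .
\]
This determines the product but leaves the scalar speed undetermined, so recovering the constant is the substantive remaining task.

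For the second stage I would substitute $c_1^{-2}f_1=c_2^{-2}f_2=g$ into the integral identity \eqref{eq:orth2}. The two terms $\tfrac{1}{8\pi}\int g(y)\,|x-y|\,dy$ then coincide and cancel, while $\Delta^{-1}(c_1^{-2}f_1)=\Delta^{-1}(c_2^{-2}f_2)=\Delta^{-1}(g)$. Using that both $1-c_1^{-2}$ and $1-c_2^{-2}$ are constant on $\Omega$ and vanish outside it, \eqref{eq:orth2} reduces, for every $x\in\partial B_R$, to $(c_2^{-2}-c_1^{-2})\,W(x)=0$, where
\[
W(x):=\int_{\Omega}\Delta^{-1}(g)(y)\,\Phi_0(x-y)\,dy .
\]

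The main obstacle, and the sole place the positivity condition \eqref{eq:bp25} is used, is to rule out $W\equiv 0$. I would argue this by a strict-positivity estimate: from $g=c_1^{-2}f_1\ge 0$ and $\int_\Omega g=c_1^{-2}\int_\Omega f_1>0$, the Newtonian potential $\Delta^{-1}(g)(y)=\int \Phi_0(y-z)\,g(z)\,dz$ is strictly positive at every point, and since $\Phi_0(x-y)>0$ for $y\in\Omega$ and $x\in\partial B_R$, the integrand defining $W(x)$ is positive, whence $W(x)>0$ there. Consequently $c_1^{-2}=c_2^{-2}$, and as both speeds are positive constants, $c_1=c_2$; substituting back into $g=c_1^{-2}f_1=c_2^{-2}f_2$ gives $f_1=f_2$, completing the proof. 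It is worth noting that without \eqref{eq:bp25} the factor $W$ could vanish and the decoupling of $c$ from $f$ would fail, which is why the nonnegativity and nontriviality of the source are essential to the separate recovery of the sound speed.
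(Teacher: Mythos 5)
Your proposal is correct and follows essentially the same route as the paper's own proof: apply condition ii) of Theorem~\ref{thm:2} to the $x_j$-independent difference to get $c_1^{-2}f_1=c_2^{-2}f_2$, then substitute into \eqref{eq:orth2} so that the $|x-y|$ terms cancel and the identity collapses to $(c_2^{-2}-c_1^{-2})W(x)=0$ on $\partial B_R$, with $W>0$ forced by the positivity hypothesis \eqref{eq:bp25}. The only (harmless) difference is presentational: you compare two configurations directly and restrict the final integral to $\Omega$, whereas the paper argues by contradiction and writes the integral over $B_R$.
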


\begin{rem}\label{rem:22}
Assumption~\ref{eq:bp25} on the internal source $f$ means that there exists an open portion of $\Omega$ on which $f$ is strictly positive. Since $f$ is generated by the absorbed energy, it is a practically reasonable condition. 
\end{rem}

By generalizing Theorem~\ref{thm:25}, we can further prove a bit more general unique determination result as follows. 

\begin{thm}\label{thm:3}
Let $(f, c)$ be an admissible TAT/PAT configuration such that 
\begin{equation}\label{eq:bp}
c^{-2}(x)=c_b^{-2}(x)+\gamma^{-2}\chi_{\Sigma},\quad x\in\Omega,
\end{equation}
where $c_b$ is a positive background sound speed which is supposed to be known in advance, and $\gamma$ is a positive constant denoting an anomalous inclusion supported in $\Sigma\subset \Omega$. Furthermore, we assume that both $c_b$ and $f$ are independent of the variable $x_j$, $1\leq j\leq 3$ and
\begin{equation}\label{eq:bp2}
\int_{\Sigma}\Delta^{-1}(c^{-2}f)(x)\cdot v(x)\ dx\neq 0
\end{equation}
for a harmonic function $v$. Then both $f$ and $\gamma$ are uniquely determined by $\Lambda_{f, c}(x, t)$ for $(x, t)\in\partial \Omega\times\mathbb{R}_+$.
\end{thm}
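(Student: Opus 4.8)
The plan is to run the uniqueness argument behind Theorem~\ref{thm:25}, with the positivity hypothesis \eqref{eq:bp25} replaced throughout by the nondegeneracy condition \eqref{eq:bp2}. Suppose $(f_1,c_1)$ and $(f_2,c_2)$ are two admissible configurations of the prescribed form, with the same known background $c_b$ and the same inclusion region $\Sigma$, producing identical data $\Lambda_{f_1,c_1}=\Lambda_{f_2,c_2}$ on $\partial\Omega\times\mathbb{R}_+$, and write $c_j^{-2}=c_b^{-2}+\gamma_j^{-2}\chi_\Sigma$. I need to show $\gamma_1=\gamma_2$ and $f_1=f_2$. First I would apply Theorem~\ref{thm:1}. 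Since $c_b$ and the $f_j$ are independent of $x_j$, and the inclusion is cylindrical in the $x_j$-direction so that $\chi_\Sigma$ --- and hence $c_j^{-2}$ --- is independent of $x_j$, the difference $\varphi=c_1^{-2}f_1-c_2^{-2}f_2$ is independent of $x_j$. Hence Theorem~\ref{thm:2}(ii) applies and yields
\[
g:=c_1^{-2}f_1=c_2^{-2}f_2\qquad\text{a.e. in }\Omega.
\]

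Next I would substitute this common value into the second identity \eqref{eq:orth2}. Since $c_1^{-2}f_1=c_2^{-2}f_2=g$, the two terms $\tfrac{1}{8\pi}\int(c_j^{-2}f_j)(y)|x-y|\,dy$ coincide and $\Delta^{-1}(c_1^{-2}f_1)=\Delta^{-1}(c_2^{-2}f_2)=\Delta^{-1}(g)$, so \eqref{eq:orth2} collapses to
\[
w(x):=\int_{\mathbb{R}^3}(c_2^{-2}-c_1^{-2})(y)\,\Delta^{-1}(g)(y)\,\Phi_0(x-y)\,dy=0,\qquad x\in\partial B_R.
\]
Setting $\beta:=\gamma_2^{-2}-\gamma_1^{-2}$, the density $\beta\chi_\Sigma\Delta^{-1}(g)$ is bounded and supported in $\Sigma$, so $w$ is its Newtonian potential: $w\in C^{1,\alpha}_{loc}(\mathbb{R}^3)$, $w$ is harmonic in $\mathbb{R}^3\setminus\overline\Sigma$, $w=O(|x|^{-1})$ at infinity, and $-\Delta w=\beta\chi_\Sigma\Delta^{-1}(g)$. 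I would then upgrade the vanishing on $\partial B_R$ to vanishing Cauchy data on $\partial\Sigma$: on $\{|x|>R\}$ the function $w$ is harmonic, decays, and vanishes on $\partial B_R$, so the exterior maximum principle forces $w\equiv0$ there; real-analyticity of harmonic functions together with the connectedness of $\mathbb{R}^3\setminus\overline\Sigma$ then propagates this to $w\equiv0$ on all of $\mathbb{R}^3\setminus\overline\Sigma$, and the $C^1$ regularity of $w$ gives $w=\partial_\nu w=0$ on $\partial\Sigma$.

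Finally I would bring in \eqref{eq:bp2}. Applied to $(f_1,c_1)$ it furnishes a harmonic function $v$ with $\int_\Sigma\Delta^{-1}(g)\,v\,dx\neq0$. Green's second identity on $\Sigma$, using $\Delta v=0$ and the vanishing Cauchy data $w=\partial_\nu w=0$ on $\partial\Sigma$, gives $\int_\Sigma v\,\Delta w\,dx=0$; since $\Delta w=-\beta\,\Delta^{-1}(g)$ in $\Sigma$, this reads $\beta\int_\Sigma v\,\Delta^{-1}(g)\,dx=0$, and the choice of $v$ forces $\beta=0$. As $\gamma_1,\gamma_2>0$, this gives $\gamma_1=\gamma_2$, hence $c_1=c_2$; then $g=c_1^{-2}f_1=c_2^{-2}f_2$ yields $f_1=f_2$, which is the desired unique determination.

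The hard part will be the propagation step, i.e.\ passing from $w|_{\partial B_R}=0$ to the vanishing Cauchy data $w=\partial_\nu w=0$ on $\partial\Sigma$; this rests on the exterior maximum principle, the real-analyticity of harmonic functions, and the topological hypothesis that $\mathbb{R}^3\setminus\overline\Sigma$ is connected (harmless for a cylindrical inclusion without cavities). A secondary delicate point is the very first reduction: to legitimately invoke Theorem~\ref{thm:2}(ii) one must ensure $\varphi$ is independent of $x_j$, which in turn requires the inclusion $\Sigma$ to be cylindrical along the $x_j$-direction so that $\chi_\Sigma$, and thus $c_j^{-2}$, is independent of $x_j$.
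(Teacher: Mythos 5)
Your proposal is correct and follows the paper's overall skeleton exactly: reduce to $c_1^{-2}f_1=c_2^{-2}f_2=:g$ via Theorem~\ref{thm:2}(ii), cancel the common terms in \eqref{eq:orth2} to get a vanishing potential $w$ of the density $\beta\chi_\Sigma\Delta^{-1}(g)$ on $\partial B_R$, convert this to $\beta\int_\Sigma\Delta^{-1}(g)\,v\,dx=0$, and invoke \eqref{eq:bp2}. The one step you execute differently is the conversion: the paper reuses the spherical-harmonics addition formula \eqref{eq:pp5} (exactly as in the derivation of \eqref{eq:orth1}), reading off from $w|_{\partial B_R}=0$ that all harmonic moments of the density vanish, hence orthogonality to every harmonic polynomial and, by density, to $v$. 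You instead treat $w$ as a Newtonian potential, kill it in the exterior by the maximum principle, propagate by unique continuation to get vanishing Cauchy data on $\partial\Sigma$, and finish with Green's identity on $\Sigma$. Both work, but the paper's moment expansion is more elementary and needs no hypothesis on the topology of $\mathbb{R}^3\setminus\overline{\Sigma}$, whereas your route genuinely requires the connectedness of $\mathbb{R}^3\setminus\overline{\Sigma}$ that you flag (a cavity inside $\Sigma$ would block the propagation to the inner boundary); on the other hand your argument accepts a $v$ harmonic merely near $\overline{\Sigma}$ without any polynomial-approximation step. Your other caveat --- that $\chi_\Sigma$ must be independent of $x_j$ for Theorem~\ref{thm:2}(ii) to apply --- is a real implicit assumption in the paper as well (it writes $\widetilde{c}(x')$), and you are right to make it explicit.
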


From our subsequent argument, one can see that if $c_b$ and $\Sigma$ in Theorem~\ref{thm:3} are taken to be $0$ and $\Omega$, and moreover $v\equiv 1$, then Theorem~\ref{thm:25} is a particular case of Theorem~\ref{thm:3}. 

\section{Proofs}

It is straightforward to show that $\hat{u}(x, k)$ satisfies
\begin{equation}\label{eq:t2}
\Delta \hat u(x, k)+\frac{k^2}{c^2(x)} \hat u(x, k)=\frac{ik}{2\pi}\frac{1}{c^2(x)} f(x),\quad (x, k)\in\mathbb{R}^3\times (0, \epsilon_0).
\end{equation}
Moreover, in order to ensure the well-posedness of the transformed equation \eqref{eq:t2}, one imposes the classical Sommerfeld radiation condition 
\begin{equation}\label{eq:t3}
\lim_{|x|\rightarrow+\infty}|x| \left(\frac{\partial \hat u(x, k)}{\partial |x|}-i k\hat u(x,k) \right)=0. 
\end{equation}
Under the temporal Fourier transform, the TAT/PAT measurement data become
\begin{equation}\label{eq:t4}
\widehat{\Lambda}(x, k)=\hat u(x, k)|_{\partial \Omega\times (0, \epsilon_0)}. 
\end{equation}

The following lemma shall be needed. 

\begin{lem}\label{lem:1}
Let $\hat u(x, k) \in H_{loc}^1(\mathbb{R}^3)$ be the solution to \eqref{eq:t2}-\eqref{eq:t3}. Then $\hat u(x, k)$ is uniquely given by the following integral equation
\begin{equation}\label{eq:t5}
\begin{split}
\hat u(x, k)=& -k^2\int_{\mathbb{R}^3}\left[1-c^{-2}(y)\right]\cdot \hat{u}(y, k)\cdot\Phi(x-y)\ dy\\
& -\frac{ik}{2\pi}\int_{\mathbb{R}^3} c^{-2}(y)\cdot f(y)\cdot \Phi(x-y)\ dy,\quad x\in\mathbb{R}^3. 
\end{split}
\end{equation}
Moreover, as $k\rightarrow +0$, we have
\begin{equation}\label{eq:t6}
\hat u(x, k)= -\frac{ik}{2\pi}\int_{\mathbb{R}^3} c^{-2}(y)\cdot f(y)\cdot \Phi_0(x-y)\ dy+\mathcal{O}(k^2),\quad x\in B_R. 
\end{equation}
\end{lem}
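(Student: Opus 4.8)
The plan is to recast \eqref{eq:t2} as a standard Helmholtz equation with a compactly supported right-hand side and then convert it into a Lippmann--Schwinger integral equation. Moving the variable part of the potential to the right, I rewrite \eqref{eq:t2} as
\begin{equation*}
(\Delta+k^2)\hat u(x,k)=k^2\bigl(1-c^{-2}(x)\bigr)\hat u(x,k)+\frac{ik}{2\pi}c^{-2}(x)f(x),
\end{equation*}
whose right-hand side is supported in $\Omega$ because $supp(1-c^{-2})\subset\Omega$ and $supp(f)\subset\Omega$. Since $\Phi$ is the outgoing fundamental solution of $-\Delta-k^2$, i.e. $(\Delta+k^2)\Phi=-\delta$, convolving against $\Phi$ and using that $\hat u$ obeys the Sommerfeld radiation condition \eqref{eq:t3} yields \eqref{eq:t5}. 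To make this rigorous I would apply Green's second identity on a large ball $B_\rho\supset\Omega$: using \eqref{eq:t2} and $\Delta_y\Phi(x-y)=-\delta_x(y)-k^2\Phi(x-y)$, the interior integral collapses to $-\hat u(x,k)$ minus the two volume potentials on the right-hand side of \eqref{eq:t5}, so that \eqref{eq:t5} is equivalent to the vanishing of the boundary integral over $\partial B_\rho$ as $\rho\to\infty$. This vanishing is the standard consequence of both $\hat u$ and $\Phi$ satisfying \eqref{eq:t3}, since $\hat u\,\partial_n\Phi-\Phi\,\partial_n\hat u=\hat u(\partial_n\Phi-ik\Phi)-\Phi(\partial_n\hat u-ik\hat u)=o(\rho^{-2})$ pointwise while $|\partial B_\rho|=O(\rho^{2})$.

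For the uniqueness claim I would restrict to $k\in(0,\epsilon_0)$ and read \eqref{eq:t5} as a second-kind integral equation $(I+T_k)\hat u=g_k$ on $L^2(B_R)$, where $T_kw(x):=k^2\int_{\mathbb{R}^3}(1-c^{-2}(y))w(y)\Phi(x-y)\,dy$ and $g_k(x):=-\frac{ik}{2\pi}\int_{\mathbb{R}^3}c^{-2}(y)f(y)\Phi(x-y)\,dy$. Because $1-c^{-2}\in L^\infty$ is supported in $B_R$ and the weakly singular kernel $\Phi(x-y)$ is square integrable on $B_R\times B_R$ with a norm bounded uniformly for small $k$ (as $\Phi\to\Phi_0$ and $\Phi_0\sim|x-y|^{-1}$ is locally $L^2$ in $\mathbb{R}^3$), the operator norm of $T_k$ on $L^2(B_R)$ is $O(k^2)$. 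Hence, after shrinking $\epsilon_0$ if necessary, $I+T_k$ is invertible by a Neumann series, which furnishes both existence and the asserted uniqueness of $\hat u$ inside $B_R$; the values outside $B_R$ are then pinned down by the same formula \eqref{eq:t5}.

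Finally, for the low-frequency expansion \eqref{eq:t6} I would insert the Taylor expansion $\Phi(x-y)=\Phi_0(x-y)+\tfrac{ik}{4\pi}-\tfrac{k^2}{8\pi}|x-y|+O(k^3)$ into \eqref{eq:t5}. The Neumann-series bound from the previous step gives the a priori estimate $\|\hat u(\cdot,k)\|_{L^2(B_R)}=\|(I+T_k)^{-1}g_k\|_{L^2(B_R)}=O(k)$, since $\|g_k\|_{L^2(B_R)}=O(k)$; consequently the scattering term $-k^2\int(1-c^{-2})\hat u\,\Phi$ in \eqref{eq:t5} is $O(k^3)$. In the remaining source term the $\Phi_0$-contribution produces exactly the leading integral in \eqref{eq:t6}, while the $\tfrac{ik}{4\pi}$ and $-\tfrac{k^2}{8\pi}|x-y|$ contributions are respectively $O(k^2)$ and $O(k^3)$, yielding \eqref{eq:t6}. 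I expect the main obstacle to be the careful bookkeeping that turns these formal orders into a genuine remainder uniform in $x\in B_R$: one must upgrade the $L^2$ a priori bound to the pointwise control needed to record the error as $\mathcal{O}(k^2)$, which requires exploiting the smoothing of the Newtonian-type potentials against the $L^\infty$ data together with the mapping properties of $\Delta^{-1}$ in \eqref{eq:linverse}.
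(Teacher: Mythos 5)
Your proposal is correct and follows essentially the same route as the paper: recast \eqref{eq:t2} as a Lippmann--Schwinger equation using the outgoing fundamental solution and the radiation condition, then obtain \eqref{eq:t6} from the low-frequency expansion $\Phi=\Phi_0+\mathcal{O}(k)$ together with the a priori bound $\hat u=\mathcal{O}(k)$. The only difference is that you establish unique solvability by a Neumann series for small $k$ (with $\|T_k\|=\mathcal{O}(k^2)$), whereas the paper simply cites the standard unique solvability of the Lippmann--Schwinger equation from Colton--Kress; both are adequate since only $k\in(0,\epsilon_0)$ is needed.
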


\begin{proof}
By the elliptic regularity (see, e.g, \cite{McL}), we known that $u\in H_{loc}^2(\mathbb{R}^3)$. The integral equation \eqref{eq:t5} is of Lippmann-Schwinger type, and it is uniquely solvable for $u\in C(B_R)$ (cf. \cite{CK}).  By using the fact that
\begin{equation}\label{eq:estimate1}
\Phi(x)=\Phi_0(x)+\mathcal{O}(k)\quad \mbox{as} \ \ k\rightarrow +0,
\end{equation}
uniformly for $x\in B_R$, one can verify \eqref{eq:t6} by straightforward asymptotic estimates. 

The proof is complete. 
\end{proof}

We are in a position to present the proof of Theorem~\ref{thm:1}.

\begin{proof}[Proof of Theorem~\ref{thm:1}]
Let $\hat u_1(x, k)$ and $\hat u_2(x, k)$ denote the acoustic wave fields, respectively, corresponding to $(f_1, c_1)$ and $(f_2, c_2)$. By \eqref{eq:equal} and \eqref{eq:t4}, one clearly has that $\hat{u}_1(x, k)=\hat u_2(x, k)$ for $(x, k)\in \partial \Omega\times (0,\epsilon_0)$. Since both $\hat u_1$ and $\hat u_2$ satisfy the same equation $(\Delta+k^2)u=0$ in $\mathbb{R}^3\backslash\overline{\Omega}$, one readily has that
\begin{equation}\label{eq:pp1}
\hat u_1(x, k)=\hat u_2(x, k)\quad\mbox{for}\ \ (x, k)\in (\mathbb{R}^3\backslash \Omega)\times (0, \epsilon_0), 
\end{equation}
and therefore
\begin{equation}\label{eq:pp2}
\hat u_1(x, k)|_{\partial B_R\times (0, \epsilon_0)}=\hat u_2(x, k)|_{\partial B_R\times (0, \epsilon_0)}. 
\end{equation}
Hence, by \eqref{eq:t5} and \eqref{eq:pp2}, we see that for $x\in\partial B_R$,
\begin{equation}\label{eq:pp6}
\begin{split}
&-k\int_{B_R}\left[1-c_1^{-2}(y)\right]\cdot \hat{u}_1(y, k)\cdot\Phi(x-y)\ dy\\
& -\frac{i}{2\pi}\int_{B_R} c_1^{-2}(y)\cdot f_1(y)\cdot \Phi(x-y)\ dy\\
=&-k\int_{B_R}\left[1-c_2^{-2}(y)\right]\cdot \hat{u}_2(y, k)\cdot\Phi(x-y)\ dy\\
& -\frac{i}{2\pi}\int_{B_R} c_2^{-2}(y)\cdot f_2(y)\cdot \Phi(x-y)\ dy.
\end{split}
\end{equation}

Next, by Lemma~\ref{lem:1} one has
\begin{equation}\label{eq:pp3}
\hat u_j(x, k)=-\frac{ik}{2\pi}\int_{B_R} c_j^{-2}(y)\cdot f_j(y)\cdot \Phi_0(x-y)\ dy+\mathcal{O}(k^2),\ \ j=1,2,
\end{equation}
for $k\in\mathbb{R}_+$ sufficiently small. Plugging \eqref{eq:pp3} into \eqref{eq:pp6} and using the series expansion for $e^{ik|x-y|}$ of $\Phi(x-y)$, one can further show by straightforward calculations (though a bit tedious) that as $k\rightarrow +0$
\begin{equation}\label{eq:ted1}
\begin{split}
&\frac{i}{2\pi}\int_{B_R} c_1^{-2}(y)\cdot f_1(y)\cdot \Phi_0(x-y)\ dy-\frac{k}{8\pi^2}\int_{B_R} c_1^{-2}(y)\cdot f_1(y)\ dy\\
&-\frac{i}{2\pi}\cdot k^2\int_{B_R} (1-c_1^{-2}(y))\cdot\Delta^{-1}(c_1^{-2}f_1)(y)\cdot\Phi_0(x-y)\ dy\\
&-\frac{i}{16\pi^2}\cdot k^2 \int_{B_R} c_1^{-2}(y)\cdot f_1(y)\cdot |x-y|\ dy+\mathcal{O}(k^3)\\
=&\frac{i}{2\pi}\int_{B_R} c_2^{-2}(y)\cdot f_2(y)\cdot \Phi_0(x-y)\ dy-\frac{k}{8\pi^2}\int_{B_R} c_2^{-2}(y)\cdot f_2(y)\ dy\\
&-\frac{i}{2\pi}\cdot k^2\int_{B_R} (1-c_2^{-2}(y))\cdot\Delta^{-1}(c_2^{-2}f_2)(y)\cdot\Phi_0(x-y)\ dy\\
&-\frac{i}{16\pi^2}\cdot k^2 \int_{B_R} c_2^{-2}(y)\cdot f_2(y)\cdot |x-y|\ dy+\mathcal{O}(k^3),
\end{split}
\end{equation}
which holds for any $x\in\partial B_R$. 

From \eqref{eq:ted1}, one immediately has \eqref{eq:orth2} and for any $x\in\partial B_R$,
\begin{equation}\label{eq:pp7}
\int_{B_R} c_1^{-2}(y)\cdot f_1(y)\cdot\Phi_0(x-y)\ dy=\int_{B_R} c_2^{-2}(y)\cdot f_2(y)\cdot\Phi_0(x-y)\ dy. 
\end{equation}

We proceed to the proof of the orthogonality relation \eqref{eq:orth1}, and first note the following addition formula for $\Phi_0(x-y)$ with $|x|>|y|$ (cf. \cite{CK}),
\begin{equation}\label{eq:pp5}
\frac{1}{4\pi |x-y|}=\sum_{\alpha=0}^\infty\sum_{\beta=-\alpha}^\alpha \frac{1}{2\alpha+1}\frac{|y|^\alpha}{|x|^{\alpha+1}} Y_\alpha^\beta(\hat x) \overline{Y_\alpha^\beta}(\hat y),
\end{equation}
where $\hat x=x/|x|$ and $\hat y=y/|y|$, and $Y_\alpha^\beta(\hat x)$ denotes the spherical harmonics of order $\alpha\in\mathbb{N}\cup\{0\}$, for $\beta=-\alpha,\ldots,\alpha$. By inserting \eqref{eq:pp5} into \eqref{eq:pp7}, together with straightforward calculations, one then has
\begin{equation}\label{eq:pp9}
\begin{split}
& \sum_{\alpha=0}^\infty\sum_{\beta=-\alpha}^\alpha \frac{1}{2\alpha+1}\cdot\frac{1}{R^{\alpha+1}}\cdot Y_\alpha^\beta(\hat x) \int_{B_R} c_1^{-2}(y)\cdot f_1(y)\cdot |y|^{\alpha} \cdot \overline{Y_\alpha^\beta}(\hat y)\ dy\\
=&  \sum_{\alpha=0}^\infty\sum_{\beta=-\alpha}^\alpha \frac{1}{2\alpha+1}\cdot\frac{1}{R^{\alpha+1}}\cdot Y_\alpha^\beta(\hat x) \int_{B_R} c_2^{-2}(y)\cdot f_2(y)\cdot |y|^{\alpha} \cdot \overline{Y_\alpha^\beta}(\hat y)\ dy,
\end{split}
\end{equation}
which holds for any $x\in\partial B_R$. Since $\{Y_\alpha^\beta(\cdot)\}_{\alpha=0,1,\ldots, \beta=-\alpha,\ldots,\alpha}$ is a complete orthonormal basis of $L^2(\mathbb{S}^2)$, where $\mathbb{S}^2$ denotes the unit sphere, one immediately has from \eqref{eq:pp9} that 
\begin{equation}\label{eq:pp10}
\begin{split}
& \int_{B_R} c_1^{-2}(y)\cdot f_1(y)\cdot |y|^\alpha\cdot \overline{Y_n^m}(\hat y)\ dy\\
=&  \int_{B_R} c_2^{-2}(y)\cdot f_2(y)\cdot |y|^\alpha \cdot \overline{Y_n^m}(\hat y)\ dy,
\end{split}
\end{equation}
for $\alpha=0,1,2,\ldots$, and $\beta=-\alpha,\ldots,\alpha$. We note that  $|y|^\alpha Y_\alpha^\beta(\hat y)$ for $\alpha=0,1,2,\ldots$, and $\beta=-\alpha,\ldots,\alpha$ yield all the homogeneous harmonic polynomials. Hence, \eqref{eq:pp10} readily implies \eqref{eq:orth1}. 

The proof is complete.   
\end{proof}

\begin{proof}[Proof of Theorem~\ref{thm:2}]
For case i), if $\varphi(x)$ is a harmonic function in $\Omega$, one can take $v(x)$ to be a harmonic function such that $v(x)=\varphi(x)$ for $x\in\Omega$. Then by \eqref{eq:orth1} in Theorem~\ref{thm:1}, one immediately has that $\varphi\equiv 0$. 

For case ii), without loss of generality, we assume that $\varphi(x)$ is independent of $x_3$ for $x=(x_1,x_2,x_3)\in\mathbb{R}^3$. Let 
\begin{equation}\label{eq:expf}
v(x)=e^{i \zeta\cdot x}\quad x\in\mathbb{R}^3,
\end{equation}
where
\begin{equation}\label{eq:expf2}
\zeta=\xi+i\xi^\perp,\quad \xi=(\xi_1,\xi_2, 0)\in\mathbb{R}^3,\ \xi^\perp=(0, 0, \xi_3^\perp)\in\mathbb{R}^3,
\end{equation}
satisfying $\xi_1^2+\xi_2^2=(\xi_3^\perp)^2$. It is straightforwardly verified that $v(x)$ in \eqref{eq:expf}-\eqref{eq:expf2} defines a harmonic function. By taking $v(x)$ in \eqref{eq:orth1} to be the one defined in \eqref{eq:expf}, we have 
\begin{equation}\label{eq:ddf1}
\begin{split}
0=&\int_{\mathbb{R}^3} \varphi(x_1, x_2) e^{i\zeta\cdot x}\ dx=\int_{B_R} \varphi(x_1, x_2) e^{i\zeta\cdot x}\ dx\\
=&\int_{\mathbb{R}^2} e^{i\xi'\cdot x'}\varphi(x')\ dx'\cdot \int_{\{x_3; (x', x_3)\in B_R\}} e^{-\xi_3^\perp\cdot x_3}\ dx_3,
\end{split}
\end{equation}
where $x'=(x_1,x_2)\in \mathbb{R}^2$ and $\xi'=(\xi_1, \xi_2)\in\mathbb{R}^2$. In \eqref{eq:ddf1}, we have made use of the fact that $\varphi$ is supported in $B_R$. Clearly, from \eqref{eq:ddf1}, one readily has
\begin{equation}\label{eq:ddf2}
\int_{\mathbb{R}^2} e^{i\xi'\cdot x'}\varphi(x')\ dx'=0,
\end{equation}
which holds for any $\xi'\in\mathbb{R}^2$. The LHS of \eqref{eq:ddf2} is the Fourier transform of the function $\varphi$, and hence one easily infers from \eqref{eq:ddf2} that $\varphi=0$. 

The proof is complete. 
\end{proof}

\begin{proof}[Proof of Theorem~\ref{thm:25}]
Without loss of generality, we assume that $f$ is independent of the variable $x_3$ and write it as $f(x')$ for $x'=(x_1, x_2)\in\mathbb{R}^2$. By contradiction, we assume that there exists another TAT/PAT configuration $(\widetilde{f}(x'), \widetilde{c})$ supported in $\Omega$ with $\widetilde{c}$ being a constant such that 
\begin{equation}\label{eq:contr1n}
\Lambda_{f, c}(x, t)=\Lambda_{\widetilde{f},\widetilde{c}}(x, t)\quad\mbox{for}\quad (x, t)\in\partial\Omega\times\mathbb{R}_+. 
\end{equation} 

Since $c^{-2}f-\widetilde{c}^{-2}\widetilde{f}$ is independent of the variable $x_3$, we have by Theorem~\ref{thm:2} that
\begin{equation}\label{eq:f1n}
c^{-2}f=\widetilde{c}^{-2}\widetilde{f}. 
\end{equation}
Therefore,
\begin{equation}\label{eq:ff1n}
\int_{B_R} (c^{-2}f)(y)\cdot |x-y|\ dy=\int_{B_R} (\widetilde{c}^{-2}\widetilde{f})(y)\cdot |x-y|\ dy
\end{equation}
and
\begin{equation}\label{eq:ff2n}
\Delta^{-1}(c^{-2} f)=\Delta^{-1}(\widetilde{c}^{-2}\widetilde{f})
\end{equation}
Next, by \eqref{eq:orth2} in Theorem~\ref{thm:1}, together with the use of \eqref{eq:ff1n} and \eqref{eq:ff2n}, there clearly holds
\begin{equation}\label{eq:ff3n}
(c^{-2}-\widetilde{c}^{-2})\int_{B_R}\Delta^{-1}(c^{-2}f)(y)\cdot \Phi_0(x-y)\ dy=0,
\end{equation}
where $x\in\partial B_R$.  Then, by the assumption \eqref{eq:bp25} and Remark~\ref{rem:22}, it is directly shown that 
\begin{equation}\label{eq:ff4n}
\Delta^{-1}(c^{-2} f)(x)>0\quad\mbox{for\ } \ x\in \Omega. 
\end{equation}
Using \eqref{eq:ff4n}, one can easily verify that
\begin{equation}\label{eq:ff5n}
\int_{B_R} \Delta^{-1}(c^{-2} f)(x)\cdot \Phi_0(x-y)\ dy>0,\quad x\in\partial B_R,
\end{equation}
which together with \eqref{eq:ff2n} immediately implies that
\begin{equation}\label{eq:ff6n}
c=\widetilde{c}. 
\end{equation}
Finally, by \eqref{eq:ff6n} and \eqref{eq:f1n}, one has
\[
f=\widetilde{f}.
\]

The proof is complete. 

\end{proof}

\begin{rem}\label{rem:3}
We note that the assumption \eqref{eq:bp25} is only needed to guarantee that (cf. \eqref{eq:ff3n} and \eqref{eq:ff5n})
\begin{equation}\label{eq:ff7n}
\int_{B_R} \Delta^{-1}(c^{-2} f)(x)\cdot \Phi_0(x-y)\ dy\neq 0, \quad x\in B_R. 
\end{equation}
By using a completely similar argument to the proof of Theorem~\ref{thm:25}, one can show that if $f$ is nonpositive on $\Omega$ but strictly negative on an open subset. Then \eqref{eq:ff7n} also holds true. But as remarked in Remark~\ref{rem:22}, \eqref{eq:bp25} is a practical condition. The condition \eqref{eq:bp2} is more general, and this can be observed by simply taking $v\equiv 1$. 
\end{rem}

\begin{proof}[Proof of Theorem~\ref{thm:3}]
The proof follows from a similar argument to that of Theorem~\ref{thm:25}.  
Without loss of generality, we can assume that $f$ and $c_b$ are independent of the variable $x_3$, and write them as $c_b(x')$ and $f(x')$ for $x'=(x_1, x_2)\in\mathbb{R}^2$. By contradiction, we assume that there exists another TAT/PAT configuration $(\widetilde{f}(x'), \widetilde{c}(x'))$ supported in $\Omega$ with $\widetilde{c}(x')$ given as
\begin{equation}\label{eq:contr0}
\widetilde{c}^{-2}(x')=c_b^{-2}(x')+\widetilde{\gamma}^{-2}\chi_{\Sigma},
\end{equation}
where $\widetilde{\gamma}$ is a positive constant, such that
\begin{equation}\label{eq:contr1}
\Lambda_{f, c}(x, t)=\Lambda_{\widetilde{f},\widetilde{c}}(x, t)\quad\mbox{for}\quad (x, t)\in\partial\Omega\times\mathbb{R}_+. 
\end{equation}   

Noting that $c^{-2}f-\widetilde{c}^{-2}\widetilde{f}$ is independent of the variable $x_3$, we have by Theorem~\ref{thm:2} that
\begin{equation}\label{eq:f1}
c^{-2}f=\widetilde{c}^{-2}\widetilde{f}. 
\end{equation}
Similar to \eqref{eq:ff3n}, we have from \eqref{eq:orth2} in Theorem~\ref{thm:1} that
\begin{equation}\label{eq:ff8n}
\begin{split}
& \int_{B_R} (1-c^{-2}(y))\cdot \Delta^{-1}(c^{-2}f)(y)\cdot\Phi_0(x-y)\ dy\\
=& \int_{B_R} (1-\widetilde{c}^{-2}(y))\cdot \Delta^{-1}(\widetilde{c}^{-2}\widetilde{f})(y)\cdot\Phi_0(x-y)\ dy
\end{split}
\end{equation}
for any $x\in B_R$. Using a completely similar argument in proving \eqref{eq:orth1} in the proof of Theorem~\ref{thm:1}, one can show from \eqref{eq:ff8n} that
\begin{equation}\label{eq:ff9n}
\begin{split}
& \int_{B_R} (1-c^{-2}(y))\cdot \Delta^{-1}(c^{-2}f)(y)\cdot v(y)\ dy\\
=& \int_{B_R} (1-\widetilde{c}^{-2}(y))\cdot \Delta^{-1}(\widetilde{c}^{-2}\widetilde{f})(y)\cdot v(y)\ dy
\end{split}
\end{equation}
for any harmonic function $v$. 

Next, by combining \eqref{eq:bp}, \eqref{eq:contr0}, \eqref{eq:f1} and \eqref{eq:ff9n}, we have
\begin{equation}\label{eq:ff3}
(\gamma^{-2}-\widetilde{\gamma}^{-2})\int_{\Sigma}\Delta^{-1}(c^{-2} f)(x)\cdot v(x)\ dx=0
\end{equation}
for any harmonic function $v(x)$. Noting \eqref{eq:bp2}, we immediately have from \eqref{eq:ff3} that 
\[
\gamma=\widetilde{\gamma},
\]
which in turn, together with \eqref{eq:f1}, implies that
\[
f=\widetilde{f}.
\]

The proof is complete. 
\end{proof}

\section*{Acknowledgement}

The work of H. Liu was supported by the FRG and start-up grants of Hong Kong Baptist University, and the NSF grant of China, No.\,11371115. The work of G. Uhlmann was supported by NSF.

\end{document}